\newtheorem{thm}{Theorem}[section]
\newtheorem{cor}[thm]{Corollary}
\newtheorem{lem}[thm]{Lemma}
\theoremstyle{definition}
\newtheorem{defn}[thm]{Definition}
\theoremstyle{remark}
\newtheorem{rem}[thm]{Remark}
\numberwithin{equation}{section}
\begin{document}

\title{\bfseries\textrm{Partial Convergence of Heterogeneous Hegselmann-Krause Opinion Dynamics}}
\author{Wei Su, ~~Yongguang Yu \thanks{{\tt suwei@amss.ac.cn, ygyu@bjtu.edu.cn}, School of Science, Beijing Jiaotong University, Beijing 100044, China}}
%
\date{}%
\maketitle

\begin{abstract}
In opinion dynamics, the convergence of the heterogeneous Hegselmann-Krause (HK) dynamics has always been an open problem for years which looks forward to any essential progress. In this short note, we prove a partial convergence conclusion of the general heterogeneous HK dynamis. That is, there must be some agents who will reach static states in finite time, while the other opinions have to evolve between them with a minimum distance if all the opinions does not reach consensus. And this result leads to the convergence of two special case of heterogeneous HK dynamics: the minimum confidence threshold is large enough, or the initial opinion difference is small enough.
\end{abstract}

\textbf{Keywords}: Convergence, heterogeneous, Hegselmann-Krause model, multi-agent systems
\section{Introduction}

Opinion evolution dynamics are an important issue in the social dynamics and deeply involved with our social lives in a large aspect, including public voting and online forum.
In recent years, analysis on opinion dynamics has attracted more and more interests of many research areas such as mathematics, physics, psychology, and information technology \cite{Castellano2009}. Due to the complexity of social dynamics, various opinion phenomena, including agreement/consensus, crowd polarization, and opinion fluctuation, can be widely observed in reality. In order to quantitatively investigate the opinion evolution mechanisms, various multi-agent based models have been proposed, and among them one of the most famous models is the so called Hegselmann-Krause (HK) model \cite{krause,hegselmann2002}.

In the HK dynamics, the opinions of all agents are assumed to take values in a finite interval, and each agent has a confidence bound. All agents whose opinion values locate within the confidence bound of one agent are called its neighbors, and the agent updates its opinion value by averaging the opinion values of the current neighbors. According to the diversity of confidence bounds, the HK model can be divided into two cases: the homogeneous case and the heterogeneous case. The model is said to be homogeneous when all agents possess the same confidence bound, and heterogeneous otherwise. Though possesses simple-looking format, the HK model can effectively present the rich phenomena of real opinion evolution, and shows its own difficulty in theoretical analysis as well \cite{Lorenz2005,Lorenz2007}. One of the most basic concern of the HK dynamics is the convergence of the opinions. For the homogeneous case, it has been proved that the opinions will converge to static state in finite time \cite{Lorenz2005,Blondel2009}, while the convergence of the heterogeneous case has always been an open problem for years \cite{Mirtabatabaei2012}, although it is supported by vast simulation results. Up to now, only a few results have appeared. In \cite{Etesami2015}, a sufficient condition for the heterogeneous HK systems to converge is given, and in \cite{Chazelle2015}, the inertial HK systems (besides the homogeneous agents, the system has some close-mind agents whose confidence bounds are 0) is proved to converge. Other than these pleasurable but limited progress, there is still no direct conclusion on the convergence of the general heterogeneous HK dynamics, and any evident theoretical progress is anticipated.

In this short note, we prove a partial convergence result about the general heterogenous HK dynamics. Here the heterogeneous confidence bounds are supposed to be positive, and it is proved that there must be some opinions that reach steady states in finite time, while the other opinions have to stay between them with a distance of the minimum confidence bound if all the opinions does not reach consensus. Using this result, we obtain the convergence of two special heterogeneous HK models. That is, when the initial opinion difference is no larger than the twice of the minimum confidence bound, or all the confidence bounds are not less than half the opinion interval, the heterogeneous HK dynamics will reach convergence.

The rest of this paper is organized as follows: Section II gives the scheme of the heterogeneous HK dynamics and the problem formulation, Section III presents the main results, and Section IV gives some concluding remarks.

\section{Preliminaries and Formulation}
\renewcommand{\thesection}{\arabic{section}}
Suppose there are $n$ agents, whose opinion values at time $t$ are denoted by $x(t)$, which take values in $[0,1]^n$,
then the heterogeneous HK model is described as follows:
\begin{equation}\label{heterHKmodel}
  x_i(t+1)=\frac{1}{|\mathcal{N}_i(x(t))|}\sum\limits_{j\in\mathcal{N}_i(x(t))}x_j(t),\,\,\,i\in\mathcal{V}=\{1,\ldots,n\},
\end{equation}
where $x_i(t)\in [0, 1], \, i\in \mathcal{V}$ is the opinion value of agent $i$ at time $t$ and
\begin{equation}\label{neigh}
 \mathcal{N}_i( x(t))=\{1\leq j\leq n\; \big|\; |x_j(t)-x_i(t)|\leq r_i\}
\end{equation}
with $r_i\in(0,1]$ representing the confidence bound of agent $i$.
\begin{rem}
On consideration of the relevance to the convergence results of the two special heterogeneous HK dynamics obtained in this paper, the confidence bounds in system (\ref{heterHKmodel}) are supposed to be positive. And when $r_i=r, i\in\mathcal{V}$ for some $r\in(0,1]$, system (\ref{heterHKmodel}) degenerates to the homogeneous case.
\end{rem}
For the homogeneous HK dynamics, it is proved
\begin{lem}\label{HKfrag}\cite{Blondel2009}
The opinion values of homogeneous dynamics (\ref{heterHKmodel})~are order-preserving, i.e., if $x_i(0)\leq x_j(0),\,\forall i,j$, then
$ x_i(t)\leq x_j(t),\,\,\,t\geq 0$. Moreover, for every $1\leq i\leq n$, $x_i(t)$ will converge to some $x_i^*$ in finite time, and either $x_i^*=x_j^*$ or $|x_i^*-x_j^*|> r$ holds for any $i, j$.
\end{lem}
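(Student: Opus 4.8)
The plan is to prove the three assertions of Lemma~\ref{HKfrag} in turn: order preservation, then finite-time convergence, then the dichotomy $x_i^{*}=x_j^{*}$ or $|x_i^{*}-x_j^{*}|>r$.

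\emph{Order preservation.} It is enough to show $x_i(t)\le x_j(t)\Rightarrow x_i(t+1)\le x_j(t+1)$ and iterate in $t$. Relabel so that $x_1(t)\le\cdots\le x_n(t)$. Since in the homogeneous case every agent uses the common radius $r$, each neighbour set is a block of consecutive indices, $\mathcal N_k(x(t))=\{l_k,\ldots,u_k\}$ with $l_k=\min\{l:x_l(t)\ge x_k(t)-r\}$ and $u_k=\max\{u:x_u(t)\le x_k(t)+r\}$, and monotonicity of the sorted sequence forces $l_k\le l_{k+1}$ and $u_k\le u_{k+1}$. The one genuine computation is an elementary averaging fact: if $(x_a,\ldots,x_b)$ and $(x_c,\ldots,x_d)$ are runs of a nondecreasing finite sequence with $a\le c$ and $b\le d$, then the average of the first is at most the average of the second; one proves it by splitting both runs at the common middle block $(x_{\max(a,c)},\ldots,x_{\min(b,d)})$ and noting that the extra left part of the first run lies below this block while the extra right part of the second run lies above it. Applied to $\mathcal N_k$ and $\mathcal N_{k+1}$ this yields $x_k(t+1)\le x_{k+1}(t+1)$.

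\emph{Finite-time convergence.} With the order fixed, say $x_1(t)\le\cdots\le x_n(t)$ for all $t$, I would induct on $n$, the case $n=1$ being trivial. The engine of the induction is that a gap wider than $r$ is permanent: if $x_{i+1}(t)-x_i(t)>r$ then no agent of $L=\{1,\ldots,i\}$ is a neighbour of any agent of $R=\{i+1,\ldots,n\}$, so each block updates within its own convex hull; in particular $x_i$ does not increase and $x_{i+1}$ does not decrease, whence $x_{i+1}(t')-x_i(t')>r$ for all $t'\ge t$. Thus once such a gap appears, from that time on the dynamics is the disjoint union of two homogeneous HK systems on $L$ and $R$, each with fewer than $n$ agents, and the inductive hypothesis applies (and the $L$-values and $R$-values of the limit differ by more than $r$). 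It therefore suffices to treat the case in which no gap exceeding $r$ ever occurs, i.e. the interaction graph stays connected for all time; there agent $k+1$ is always a neighbour of agent $k$, $x_1(t)$ is nondecreasing and bounded so $x_1(t+1)-x_1(t)\to 0$, and since $x_1(t+1)-x_1(t)\ge\frac1n\bigl(x_2(t)-x_1(t)\bigr)\ge0$ we get $x_2(t)\to x_1^{*}$, where $x_1^{*}:=\lim_t x_1(t)$; propagating this estimate along the connected chain forces $x_k(t)\to x_1^{*}$ for every $k$, so the diameter vanishes. Hence $x_n(t)-x_1(t)\le r$ after finitely many steps, at which time the interaction graph is complete and the whole population jumps to its common average in the next step — an exact fixed point reached in finite time.

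\emph{Dichotomy at the limit.} Suppose $x(t)$ has reached its limit $x^{*}$ with $x_1^{*}\le\cdots\le x_n^{*}$; being a fixed point, $x_i^{*}=\frac1{|\mathcal N_i(x^{*})|}\sum_{j\in\mathcal N_i(x^{*})}x_j^{*}$ for each $i$. Form the graph $G^{*}$ with $k\sim l$ iff $|x_k^{*}-x_l^{*}|\le r$, fix a connected component $C$, and set $m=\min_{k\in C}x_k^{*}$. If $x_k^{*}=m$ then all $G^{*}$-neighbours of $k$, which lie in $C$, have value $\ge m$ and average $m$, hence value exactly $m$; iterating along paths in $C$ gives $x_k^{*}=m$ throughout $C$. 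So $x^{*}$ is constant on each component of $G^{*}$, and therefore whenever $x_i^{*}\ne x_j^{*}$ the sorted list must contain a consecutive gap exceeding $r$ between positions $i$ and $j$, which yields $|x_i^{*}-x_j^{*}|>r$.

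The step I expect to be the real obstacle is the finite-time claim, specifically, in the connected case, showing the diameter genuinely tends to $0$ and then converting "tends to $0$" into "becomes $\le r$ within a bounded number of steps", which requires some $\varepsilon$-bookkeeping along the chain. This is exactly the content of the main theorem of \cite{Blondel2009}, whose proof I would follow, the contraction-along-the-chain sketch above being its core mechanism.
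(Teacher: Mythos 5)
The paper does not actually prove this lemma: it is imported wholesale from \cite{Blondel2009}, and the only ingredient of it that the paper reproduces is the averaging monotonicity fact (Lemma~\ref{monosmlem}), which is exactly the ``one genuine computation'' you isolate in your order-preservation step. So there is no in-paper proof to compare against; judged on its own, your reconstruction is correct and follows the standard Blondel--Hendrickx--Tsitsiklis line. Two points are worth making explicit if you write it out in full. First, in the chain-propagation step, $x_2(t)$ is not monotone, so to pass from $x_2(t)-x_1(t)\to 0$ to $x_3(t)-x_2(t)\to 0$ you should use the inequality $\tfrac1n\bigl(x_3(t)-x_2(t)\bigr)\le \bigl(x_2(t+1)-x_2(t)\bigr)+\bigl(x_2(t)-x_1(t)\bigr)$, whose right-hand side tends to $0$ because $x_2(t)$ converges; this is the bookkeeping you anticipate, and it does close. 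Second, note that gap persistence only yields $x_{i+1}^*-x_i^*\ge r$ in the limit, not the strict inequality the lemma asserts; the strict ``$>r$'' must come, as it does in your third paragraph, from the fixed-point argument (equal-to-$r$ separation at a fixed point would make the two agents neighbors and hence force their values to coincide). Your structure --- induct on $n$, split permanently at any gap exceeding $r$, and drive the diameter to $0$ in the never-splitting case --- is organized slightly differently from the paper's own Theorem~\ref{partconver} (which tracks $x_{min}(t)$ and an $\epsilon$-cluster around its limit), but it is the same mechanism, and both reduce the final consensus step to the observation that a diameter below $r$ makes the interaction graph complete.
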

Here, the convergence of an opinion can be defined as follows:
\begin{defn}
The opinion value $x_i(t)$ of agent $i$ is said to converge if there exists $x_i^*\in[0,1]$ such that $\lim_{t\rightarrow \infty}x_i(t)=x_i^*$; and converge in finite time, if there exists time $T\geq 0$ such that $x_i(t)=x_i^*$ for $t\geq T$.
\end{defn}
Lemma \ref{HKfrag} says that the opinion of the homogeneous HK model will converge in finite time, but it is more complex for the heterogeneous case. Though vast amount of simulations support the convergence of the heterogeneous HK model, systems can be easily given that the convergence may take place in an asymptotical way, rather than in finite time. For example, suppose $\mathcal{V}=\{1,2,3\}, x_i(0)=0.1, 0.4, 0.8, r_i=0.1, 0.4, 0.1$ respectively, then $x_1(t)=0.1, x_3(t)=0.8$ for $t\geq 0$, but $x_2(t)$ tends to $0.45$ asymptotically.

The order-preserving property and the monotone change of the boundary opinions of the homogeneous HK model can be obtained by the following lemma:
\begin{lem}\label{monosmlem}\cite{Blondel2009}
Suppose $\{z_i, \, i=1, 2, \ldots\}$ is a nonnegative nondecreasing (nonincreasing) sequence.  Then for any integer $m>0$, the sequence
\begin{equation*}
 g_m(k)=\frac{1}{k}\sum\limits_{i=m+1}^{m+k}z_i, \, k=1, 2, \ldots,
\end{equation*}
is monotonously nondecreasing (nonincreasing) with respect to $k$.
\end{lem}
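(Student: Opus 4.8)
The plan is to prove the nondecreasing case in full; the nonincreasing case then follows simply by reversing every inequality below (and in fact the nonnegativity hypothesis plays no role). Fix $m$ and abbreviate $S_k = \sum\limits_{i=m+1}^{m+k} z_i$, so that $g_m(k) = S_k/k$. The first step is the elementary recursion obtained by splitting off the last summand of $S_{k+1}$:
\begin{equation*}
g_m(k+1) = \frac{S_k + z_{m+k+1}}{k+1} = \frac{k}{k+1}\, g_m(k) + \frac{1}{k+1}\, z_{m+k+1}, \qquad k \ge 1,
\end{equation*}
which displays $g_m(k+1)$ as a convex combination of the old average $g_m(k)$ and the newly adjoined term $z_{m+k+1}$.

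From this recursion I would compute directly
\begin{equation*}
g_m(k+1) - g_m(k) = \frac{1}{k+1}\left( z_{m+k+1} - g_m(k) \right),
\end{equation*}
and then observe that $g_m(k)$ is the arithmetic mean of $z_{m+1}, \ldots, z_{m+k}$, each of which is at most $z_{m+k+1}$ because the sequence is nondecreasing; hence $g_m(k) \le z_{m+k+1}$ and the displayed difference is nonnegative, giving $g_m(k+1) \ge g_m(k)$ for all $k$. In the nonincreasing case the same two lines yield $g_m(k) \ge z_{m+k+1}$, hence $g_m(k+1) \le g_m(k)$. One could equivalently phrase this as an induction on $k$ with the same inductive step, or invoke the mediant inequality $a/b \le (a+c)/(b+d) \le c/d$ (valid when $a/b \le c/d$ and $b,d>0$) applied with $a = S_k$, $b = k$, $c = z_{m+k+1}$, $d = 1$.

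I do not anticipate any genuine obstacle here: the entire content is the one-line recursion together with the fact that adjoining to a finite average a value no smaller than that average cannot decrease it. The only point requiring a little care is that the comparison $g_m(k) \le z_{m+k+1}$ relies on monotonicity of the sequence across the index $m+k+1$, i.e.\ on the fact that every entry already averaged lies on the correct side of the next entry — this is precisely where the hypothesis on $\{z_i\}$ is used.
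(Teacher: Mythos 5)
Your proof is correct: the recursion $g_m(k+1)-g_m(k)=\frac{1}{k+1}\bigl(z_{m+k+1}-g_m(k)\bigr)$ together with the observation that the average of $z_{m+1},\ldots,z_{m+k}$ is at most (resp.\ at least) $z_{m+k+1}$ settles both cases, and you are right that nonnegativity is not needed. The paper itself gives no proof — it only cites Blondel, Hendrickx and Tsitsiklis — and your argument is the standard one that would appear there, so there is nothing further to compare.
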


\section{Main Results}\label{Consensus_condition}
\renewcommand{\thesection}{\arabic{section}}
\subsection{Partial Convergence of General Heterogeneous HK Systems}

\begin{thm}\label{partconver}
Let $r=\min_{i\in \mathcal{V}}r_i$, then for system (\ref{heterHKmodel}), there exist $c,c'\in[0,1]$ with $c\leq c'$, $\mathcal{C},\mathcal{C}'\subset \mathcal{V}$ with $|\mathcal{C}|\geq 1,|\mathcal{C}'|\geq 1$ and $t^*\geq 0$, such that\\
~~~(a) $x_i(t)=c,x_j(t)=c'$ for $i\in\mathcal{C},j\in\mathcal{C}',t\geq t^*$;\\
~~~(b) the convergence is reached in finite time, if $c'-c\leq 2r$;\\
~~~(c) $x_k(t)-c>r, c'-x_k(t)>r$ for $k\in \mathcal{V}-\mathcal{C}\bigcup\mathcal{C}',t\geq t^*$, if $c'-c> 2r$.
\end{thm}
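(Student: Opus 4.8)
The plan is to take $c:=\lim_{t\to\infty}m(t)$ and $c':=\lim_{t\to\infty}M(t)$, where $m(t):=\min_{i\in\mathcal V}x_i(t)$ and $M(t):=\max_{i\in\mathcal V}x_i(t)$. The first, routine, step is that $m(t)$ is nondecreasing and $M(t)$ is nonincreasing: each $x_i(t+1)$ is an average of opinions lying in $[m(t),M(t)]$, so $m(t)\le m(t+1)$ and $M(t+1)\le M(t)$; being monotone and bounded in $[0,1]$, both sequences converge, and this gives $c\le c'$.

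The heart of the proof --- and the step I expect to be the main obstacle --- is to upgrade this to finite-time attainment of the extremes: there is $t_0\ge 0$ with $m(t)=c$ and $M(t)=c'$ for all $t\ge t_0$. I would argue this by contradiction for $m$, the case of $M$ being symmetric. Suppose $m(t)<c$ for all $t$. Since $m$ is nondecreasing and tends to $c$, there are infinitely many times $t$ at which $m(t+1)>m(t)$, and at each of them every agent $i$ with $x_i(t)=m(t)$ must possess a neighbour $j$ with $x_j(t)>m(t)$ --- otherwise $x_i(t+1)=m(t)$, whence $m(t+1)\le m(t)$. Thus the minimal opinion is repeatedly dragged rightwards by strictly positive amounts, while at the same time $m(t+1)-m(t)\to 0$. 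I would turn this tension into a contradiction using the finiteness of $\mathcal V$: pick an agent that realises the minimum at infinitely many of these increase times; at such a time, writing the sorted profile $x_{(1)}(t)\le\cdots\le x_{(n)}(t)$, its neighbourhood is an initial block $\{x_{(1)}(t),\dots,x_{(k)}(t)\}$, so its update equals the block average $\frac1k\sum_{l=1}^{k}x_{(l)}(t)$, whose monotone dependence on $k$ is governed by Lemma~\ref{monosmlem}. Quantifying how far this agent is pushed, together with where the agents immediately to the right of the minimum must sit for the drag to recur while the increments go to zero, should force the opinions near $c$ into a configuration that cannot persist, which is the contradiction.

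Granting finite-time attainment, the remainder is short. For $t\ge t_0$ the agents with $x_i(t)=c$ are precisely those realising $m(t)$, and this set is nonincreasing in $t$: if $x_k(t)>c$ and $x_k(t+1)=c$, then the average over $\mathcal N_k(x(t))$ of values all $\ge c$ equals $c$, forcing every such neighbour --- in particular $k$ itself --- to equal $c$, a contradiction. Being nonincreasing, nonempty, and finite, it stabilises after some $t^*\ge t_0$ to a nonempty set $\mathcal C$; symmetrically the agents at $c'$ stabilise to a nonempty $\mathcal C'$. This proves (a).

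For (c), fix $t\ge t^*$ and $k\in\mathcal V-(\mathcal C\cup\mathcal C')$; then $x_k(t)>c$ strictly, since $k$ is not among the agents at $m(t)=c$. If $x_k(t)-c\le r$, then because $r\le r_i$ for every $i\in\mathcal C$, agent $k$ lies in $\mathcal N_i(x(t))$, so $x_i(t+1)$ is an average of values $\ge c$ one of which, $x_k(t)$, exceeds $c$; hence $x_i(t+1)>c=x_i(t)$, contradicting $i\in\mathcal C$. Therefore $x_k(t)-c>r$, and the mirror-image argument gives $c'-x_k(t)>r$. For (b), if $c'-c\le 2r$ then an interior agent $k$ would satisfy $\min\{x_k(t)-c,\,c'-x_k(t)\}\le \tfrac12(c'-c)\le r$, which the argument just given forbids; hence $\mathcal V=\mathcal C\cup\mathcal C'$ and every opinion is frozen for $t\ge t^*$, i.e.\ convergence holds in finite time (a consensus if moreover $c=c'$).
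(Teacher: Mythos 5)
Your overall architecture matches the paper's: take $c=\lim_t x_{\min}(t)$ and $c'=\lim_t x_{\max}(t)$, show these limits are attained by a frozen nonempty set of agents, and then derive (b) and (c) from the fact that any agent within distance $r$ of a frozen agent would pull it off its limit. Your endgame is correct and in fact cleanly argued: the set $\{i: x_i(t)=c\}$ is nonincreasing once all opinions are $\geq c$ (an average of values $\geq c$ equals $c$ only if all of them equal $c$), hence stabilizes to $\mathcal{C}$; the neighbor-pulling argument then gives (c), and (b) follows since an interior agent would be within $\tfrac12(c'-c)\leq r$ of one of the endpoints.

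However, the step you yourself flag as ``the heart of the proof'' --- finite-time attainment of $c$ by the minimum --- is precisely where the entire content of the theorem lives, and your proposal does not prove it: ``should force the opinions near $c$ into a configuration that cannot persist'' is a conjecture, not an argument. The tension you describe (the minimum is dragged up by strictly positive amounts infinitely often while the increments tend to $0$) is not by itself contradictory, because the dragging neighbor could a priori sit arbitrarily close above the minimum, producing arbitrarily small drags forever. The missing quantitative idea, which is the core of the paper's proof, is a \emph{gap dichotomy}: fix $\epsilon<r/n^2$ and consider the cluster $\mathcal{V}^1_\epsilon(t)=\{i: c-\epsilon<x_i(t)\leq c+(n-1)\epsilon\}$; if some agent sat in the intermediate band $(c+(n-1)\epsilon,\,c+r-\epsilon]$, it would be a neighbor of every cluster member (since all $r_i\geq r$) and a Lemma~\ref{monosmlem}-type averaging bound would push \emph{every} agent's value strictly above $c$ at the next step, contradicting $x_{\min}(t)\leq c$. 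Hence every non-cluster agent is more than $r-\epsilon$ above $c$. One then still needs two further steps your sketch omits: (i) non-cluster agents never re-enter the cluster, and cluster agents with an outside neighbor are ejected at the next step, so the cluster is nonincreasing and eventually has no outside neighbors; (ii) the isolated cluster has diameter $<n\epsilon<r$, so it is internally complete and collapses in one step to a common value, which must equal $c$. Without the gap of order $r$ (as opposed to order $\epsilon$), your intended contradiction does not close, so as written the proposal has a genuine gap at its central step.
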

\begin{proof}
Define functions
\begin{equation*}
  x_{max}(t)=\max\limits_{i\in \mathcal{V}}x_i(t),\,\,\,x_{min}(t)=\min\limits_{i\in \mathcal{V}}x_i(t)
\end{equation*}
for $t\geq 0$.
It is easy to see that $x_{max}(t)$ is nonincreasing and $x_{min}(t)$ is nondecreasing.

Since \{$x_{min}(t), t\geq 0\}$ is monotonically nondecreasing, and has an upper bound 1, thus a real number $c\in [0,1]$ exists such that
\begin{equation}\label{limitmin}
  \lim\limits_{t\rightarrow \infty}x_{min}(t)=c.
\end{equation}
This implies for $\forall\epsilon>0$, there exists $T\geq 0$, such that $c-\epsilon<x_{min}(t)\leq c$ for $t>T$.

Take $\epsilon\in(0,\frac{r}{n^2})$, $T\geq 0$ such that $c-\epsilon<x_{min}(T)\leq c$. Denote $\mathcal{V}_\epsilon^1(t)=\{i\in\mathcal{V}|c-\epsilon<x_i(t)\leq c+(n-1)\epsilon\}, t\geq 0$. If $|\mathcal{V}_\epsilon^1(T)|=n$, then $x_{max}(T)-x_{min}(T)<n\epsilon\leq \frac{r}{n}$, which means all agents at time $T$ are neighbors to each other, and hence the opinions converge to a common average value at time $T+1$. Otherwise, $|\mathcal{V}_\epsilon^1(T)|<n$. If there exists $j\notin \mathcal{V}_\epsilon^1(T)$ and $c+(n-1)\epsilon<x_j(T)\leq c+r-\epsilon$, then for $i\in \mathcal{V}_\epsilon^1(T)$, we have $j\in\mathcal{N}_i( x(T))$ and by Lemma \ref{monosmlem},
\begin{equation*}
\begin{split}
  x_i(T+1) &  =\frac{1}{|\mathcal{N}_i( x(T))|}\sum\limits_{k\in\mathcal{N}_i( x(T))}x_k(T)\\
    & \geq \frac{1}{|\mathcal{V}_\epsilon^1(T)|+1}\bigg(\sum\limits_{k\in\mathcal{V}_\epsilon^1(T)}x_k(T)+x_j(T)\bigg)\\
    & > \frac{1}{|\mathcal{V}_\epsilon^1(T)|+1}(|\mathcal{V}_\epsilon^1(T)|(c-\epsilon)+c+(n-1)\epsilon)\\
    & = c-\epsilon+ \frac{n}{|\mathcal{V}_\epsilon^1(T)|+1}\epsilon\geq c,
\end{split}
\end{equation*}
which contradicts with (\ref{limitmin}). Thus for $j\notin \mathcal{V}_\epsilon^1(T)$, we have $x_j(T)>c+r-\epsilon$ and by repeating the above discussion, we have
\begin{equation}\label{distant}
  x_j(t)>c+r-\epsilon,\,\,\mbox{if}\,\,j\notin\mathcal{V}_\epsilon^1(t),\,\, \mbox{for}\,\,t\geq T.
\end{equation}
For $j\notin\mathcal{V}_\epsilon^1(T)$, by Lemma \ref{monosmlem} and (\ref{distant}), we have
\begin{equation*}
\begin{split}
  x_j(T+1)&  =\frac{1}{|\mathcal{N}_j( x(T))|}\sum\limits_{k\in\mathcal{N}_j( x(T))}x_k(T)\\
  & \geq \frac{1}{|\mathcal{N}_j( x(T))|}\bigg(\sum\limits_{k\in\mathcal{V}_\epsilon^1(T)}x_k(T)+\sum\limits_{k\notin\mathcal{V}_\epsilon^1(T)}x_k(T)\bigg)\\
  & > \frac{1}{|\mathcal{N}_j( x(T))|}\bigg((|\mathcal{N}_j(x(T))|-1)(c-\epsilon)+c+r-\epsilon\bigg)\\
  & = c+\frac{r}{|\mathcal{N}_j(x(T))|}-\epsilon\geq c+\frac{r}{n}-\epsilon>c+(n-1)\epsilon,
\end{split}
\end{equation*}
which means $j\notin\mathcal{V}_\epsilon^1(T+1)$. Consider (\ref{distant}) again and repeat the above discussion, we have
\begin{equation}\label{jnotin}
  j\notin\mathcal{V}_\epsilon^1(t),\,\,\, t\geq T.
\end{equation}
This means that if one agent $j$ is not in the set $\mathcal{V}_\epsilon^1(T)$, it will never get in it.
For $i\in \mathcal{V}_\epsilon^1(T)$, if there exist $j\in\mathcal{N}_i( x(T))-\mathcal{V}_\epsilon^1(T)$, then by (\ref{distant}) and Lemma \ref{monosmlem},  we have
\begin{equation}\label{igetout}
  \begin{split}
  x_i(T+1) &  =\frac{1}{|\mathcal{N}_i( x(T))|}\sum\limits_{k\in\mathcal{N}_i( x(T))}x_k(T)\\
    & \geq \frac{1}{|\mathcal{V}_\epsilon^1(T)|+1}\bigg(\sum\limits_{k\in\mathcal{V}_\epsilon^1(T)}x_k(T)+x_j(T)\bigg)\\
    & > \frac{1}{|\mathcal{V}_\epsilon^1(T)|+1}(|\mathcal{V}_\epsilon^1(T)|(c-\epsilon)+c+r-\epsilon)\\
    & = c+\frac{r}{|\mathcal{V}_\epsilon^1(T)|+1}-\epsilon\geq c+\frac{r}{n}-\epsilon>c+(n-1)\epsilon.
\end{split}
\end{equation}
This means if one agent $i$ in $\mathcal{V}_\epsilon^1(T)$ has a neighbor which is not in $\mathcal{V}_\epsilon^1(T)$, then $i$ will leave $\mathcal{V}_\epsilon^1(T)$ at the next time, and by (\ref{jnotin}), we know that $\mathcal{V}_\epsilon^1(t)\supset\mathcal{V}_\epsilon^1(t+1)$ for $t\geq T$. Since $|\mathcal{V}_\epsilon^1(T)|<n$, and $|\mathcal{V}_\epsilon^1(t)|\geq 1, t\geq T$ by (\ref{limitmin}), there exists time $T^*\geq T$ such that for $t\geq T^*$,
\begin{equation}\label{noneighout}
  \mathcal{N}_i(x(t))\bigcap(\mathcal{V}-\mathcal{V}_\epsilon^1(t))=\varnothing,\,\,\,i\in \mathcal{V}_\epsilon^1,
\end{equation}
which means all the agents in $\mathcal{V}_\epsilon^1(t)$ have no neighbor which is not in $\mathcal{V}_\epsilon^1(t)$. By (\ref{limitmin}), we have
\begin{equation}\label{limitset}
  x_i(t+1)=c,\,\,\,i\in\mathcal{V}_\epsilon^1(T^*),\,\,\,t\geq T^*.
\end{equation}
Thus by (\ref{jnotin}), (\ref{igetout}) and (\ref{limitset}), we know that for every $i\in \mathcal{V}_\epsilon^1(T^*)$, it never has neighbors outside $\mathcal{V}_\epsilon^1(T^*)$. Similarly, there exists $c^{'}\in[c,1]$ and $T^{*'}\geq 0$ such that
$\lim\limits_{t\rightarrow \infty}x_{max}(t)=c^{'}$, and
\begin{equation}\label{limitset2}
  x_i(t+1)=c^{'},\,\,\,i\in\mathcal{V}_\epsilon^2(T^{*'}),\,\,\,t\geq T^{*'}
\end{equation}
where $\mathcal{V}^2_\epsilon(t)=\{i|c^{'}-(n-1)\epsilon\leq x_i(t)<c^{'}+\epsilon\}, t\geq 0$, and all the agents in $\mathcal{V}_\epsilon^2(T^{*'})$ have no neighbor which is not in $\mathcal{V}_\epsilon^2(T^{*'})$. Let $\mathcal{C}=\mathcal{V}^1_\epsilon(T^*), \mathcal{C}'=\mathcal{V}^2_\epsilon(T^{*'})$, then conclusion (a) holds.
Denote $\mathcal{V}_\epsilon=\mathcal{C}\bigcup\mathcal{C}'$. Take $t\geq \max\{T^*,T^{*'}\}$, if $|c'-c|\leq 2r$, then $\mathcal{V}_\epsilon=\mathcal{V}$, and the convergence holds. Or there is $k\in\mathcal{V}-\mathcal{V}_\epsilon^1(t)$, and either $x_k(t)-c\leq r$ or $c'-x_k(t)\leq r$ holds, which contradicts with (\ref{noneighout}). If $c'-c>2r$,
(\ref{limitset}) and (\ref{limitset2}) indicate that the agents in $\mathcal{V}_\epsilon$ reach static states, while all agents that are not in $\mathcal{V}_\epsilon$ evolve in the interval $(c+\bar{r},c^{'}-\bar{r})$ with $\bar{r}=\min\limits_{i\in \mathcal{V}_\epsilon} r_i$ since by (\ref{noneighout}) they cannot be neighbors of the agents in $\mathcal{V}_\epsilon$, thus conclusion (c) holds.
\end{proof}
\begin{cor}\label{spec1}
For the heterogeneous HK system (\ref{heterHKmodel}), if the initial opinions satisfy $x_{\max}(0)-x_{\min}(0)\leq 2r$, then (\ref{heterHKmodel}) will converge in finite time.
\end{cor}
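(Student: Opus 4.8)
The plan is to obtain this as an immediate consequence of part (b) of Theorem \ref{partconver}. First I would recall the monotonicity observed at the outset of that theorem's proof: $x_{\max}(t)$ is nonincreasing and $x_{\min}(t)$ is nondecreasing, since each $x_i(t+1)$ is a convex combination of the numbers $\{x_j(t)\}_{j}$ and therefore lies in $[x_{\min}(t),x_{\max}(t)]$. Hence
\[
 x_{\max}(t)-x_{\min}(t)\;\le\; x_{\max}(0)-x_{\min}(0)\;\le\; 2r,\qquad t\ge 0.
\]

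Next I would let $t\to\infty$. Writing $c=\lim_{t\to\infty}x_{\min}(t)$ and $c'=\lim_{t\to\infty}x_{\max}(t)$ for the two limiting values that Theorem \ref{partconver} produces, the displayed bound yields $c'-c\le 2r$. This is exactly the situation covered by Theorem \ref{partconver}(b), which then guarantees that system (\ref{heterHKmodel}) converges in finite time.

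Since all the real work is already done inside Theorem \ref{partconver}, there is essentially no obstacle here; the only point needing a moment's care is to check that the constants $c,c'$ of the corollary coincide with the quantities $\lim_{t\to\infty}x_{\min}(t)$ and $\lim_{t\to\infty}x_{\max}(t)$ used in the theorem, so that the inequality $c'-c\le 2r$ can legitimately be fed into case (b). If a self-contained argument is preferred, one can instead re-examine the clusters $\mathcal{C}=\mathcal{V}_\epsilon^1(T^*)$ and $\mathcal{C}'=\mathcal{V}_\epsilon^2(T^{*'})$ built in the theorem: under the hypothesis $x_{\max}(0)-x_{\min}(0)\le 2r$ any agent $k\notin\mathcal{C}\cup\mathcal{C}'$ would satisfy $x_k(t)-c\le r$ or $c'-x_k(t)\le r$ and hence be a neighbour of some agent in $\mathcal{C}\cup\mathcal{C}'$, contradicting the no-outside-neighbour property (\ref{noneighout}); therefore $\mathcal{C}\cup\mathcal{C}'=\mathcal{V}$ and all opinions are eventually constant.
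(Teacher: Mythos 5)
Your proposal is correct and follows the same route as the paper: monotonicity of $x_{\max}(t)$ and $x_{\min}(t)$ gives $c'-c\le 2r$ in the limit, and Theorem \ref{partconver}(b) then yields finite-time convergence. You simply spell out the monotonicity step (which the paper compresses into a citation of Lemma \ref{monosmlem}) and add an optional self-contained check that $\mathcal{C}\cup\mathcal{C}'=\mathcal{V}$, which mirrors the argument already inside the theorem's proof.
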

\begin{proof}
Since $x_{\max}(0)-x_{\min}(0)\leq 2r$, by Lemma \ref{monosmlem} and Theorem \ref{partconver}, $c'-c\leq 2r$, then the conclusion holds by Theorem \ref{partconver} (b).
\end{proof}
\begin{cor}\label{spec2}
For the heterogeneous HK system (\ref{heterHKmodel}), if $r\geq 0.5$, then for any initial opinion values, (\ref{heterHKmodel}) will converge in finite time.
\end{cor}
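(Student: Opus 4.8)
The plan is to reduce this corollary immediately to Corollary \ref{spec1}. The key observation is the standing assumption built into the model (\ref{heterHKmodel}), namely that every opinion value lies in the unit interval: $x_i(t)\in[0,1]$ for all $i$ and all $t\geq 0$. In particular, at the initial time we have $x_{\min}(0)\geq 0$ and $x_{\max}(0)\leq 1$, so the initial diameter satisfies $x_{\max}(0)-x_{\min}(0)\leq 1$, regardless of $n$ or of the particular confidence bounds $r_i$.

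Next I would simply combine this bound with the hypothesis $r\geq 0.5$, which gives $2r\geq 1\geq x_{\max}(0)-x_{\min}(0)$. Thus the initial configuration meets exactly the condition $x_{\max}(0)-x_{\min}(0)\leq 2r$ required by Corollary \ref{spec1}, and invoking that corollary yields convergence in finite time for any initial opinion vector in $[0,1]^n$. (Equivalently, one could trace through Theorem \ref{partconver} directly: by Lemma \ref{monosmlem} the limits $c=\lim_{t\to\infty}x_{\min}(t)$ and $c'=\lim_{t\to\infty}x_{\max}(t)$ satisfy $c'-c\leq x_{\max}(0)-x_{\min}(0)\leq 1\leq 2r$, so part (b) of the theorem applies and the dynamics converges in finite time.)

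I do not expect any genuine obstacle here; the statement is an essentially immediate specialization. The only point worth flagging is that the argument crucially uses the confinement of opinions to $[0,1]$ to bound the initial spread by $1$ uniformly — this is what makes the threshold $r\geq 0.5$ the relevant one — and that, having done so, no further estimate, induction, or case analysis beyond what is already contained in Corollary \ref{spec1} is needed.
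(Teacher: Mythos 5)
Your argument is exactly the paper's: opinions are confined to $[0,1]$, so $x_{\max}(0)-x_{\min}(0)\leq 1\leq 2r$, and Corollary~\ref{spec1} applies directly. The proposal is correct and matches the paper's proof.
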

\begin{proof}
If $r\geq 0.5$, we can get that $x_{\max}(0)-x_{\min}(0)\leq1\leq 2r$, then the conclusion holds by Corollary \ref{spec1}.
\end{proof}

\section{Conclusions}\label{Conclusions}

Convergence of the heterogeneous Hegselmann-Krause opinion dynamics has always been an open problem for years. In this paper, a partial convergence result is proved, where it is shown that some opinions must reach convergence in finite time, and the other opinions will stay between them with a distance of the minimum confidence bound if the whole system does not reach convergence. After that, the convergence of two special heterogeneous HK model is obtained, that is, when the initial opinion difference is small enough, or the confidence bound is large enough, the heterogeneous HK dynamics will reach convergence.

\end{document}